\documentclass[11pt]{article}
\usepackage{enumerate}
\usepackage{amssymb,a4wide,latexsym,makeidx,epsfig,fleqn}
\usepackage{amsthm}
\usepackage{amsmath}
\usepackage{enumerate}
\usepackage{cite}   
\usepackage{tikz}
\newtheorem{theorem}{Theorem}[section]

\newtheorem{lemma}[theorem]{Lemma}

\newtheorem{problem}[theorem]{Problem}

\newtheorem{conjecture}[theorem]{Conjecture}

\begin{document}
	\textwidth 150mm \textheight 225mm
	\title{Size and spectral radius conditions for the existence of $k$-factor in graphs with given minimum degree
		\thanks{Supported by the National Natural Science Foundation of China (Nos. 12271439 and  12261073)}}
	\author{Zengzhao Xu$^{a,b}$, Bo Deng$^{c}$, Ligong Wang$^{a,b,}$\footnote{Corresponding author.}\\
		{\small $^{a}$ School of Mathematics and Statistics, Northwestern Polytechnical University,}\\
		{\small  Xi'an, Shaanxi 710129, P.R. China.}\\
		{\small $^b$ Xi'an-Budapest Joint Research Center for Combinatorics, Northwestern
			Polytechnical University,}\\
		{\small  Xi'an, Shaanxi 710129, P.R. China.}\\
		{\small $^{c}$ School of Mathematics and Statistics, Qinghai Normal University, Xining 810016, P.R. China.}\\
		{\small E-mail: xuzz0130@163.com; dengbo450@163.com; lgwangmath@163.com}\\}
	\date{}
	\maketitle
	\begin{center}
		\begin{minipage}{120mm}
			\vskip 0.3cm
\begin{center}
{\small {\bf Abstract}}
\end{center}
{\small In recent years, characterizing the conditions for the existence of an $[a,b]$-factor in graphs has received significant academic attention. For integers $b\ge a\ge 1$, an $[a, b]$-factor of a graph $G$ is defined as a spanning subgraph $H$ such that $a\le d_{H}(v)\le b$ for every $v\in V(G)$.  If $a=b=k$, a $[k, k]$-factor is called a $k$-factor of $G$. 
Let $\delta(G)$ denote the minimum degree of a graph $G$. Hao and Li [Electron. J. Combin. (2024)] posed the problem of characterizing the size or spectral radius conditions for a graph to contain an $[a,b]$-factor with $\delta(G)\ge a$. Fan, Lin and Zhu [Discrete
Math. (2026)] solved the size version of this problem for the case $a < b.$ In this paper, for $a=b=k$, we characterize the size condition for a graph to have a $k$-factor with $\delta(G)\ge k$. Using this result, we also characterize the spectral radius and distance spectral radius conditions for a graph to have a $k$-factor, respectively.

\vskip 0.1in \noindent {\bf Key Words}: \ $[a,b]$-factor, Size, Spectral radius, Distance spectral radius. \vskip
0.1in \noindent {\bf AMS Subject Classification }: \ 05C05. }
\end{minipage}
\end{center}

\section{Introduction}

In this paper, we only consider finite, undirected, and simple graphs. Let $G$ be a graph with vertex set $V(G)$ and edge set $E(G)$. Denote by $n=|V(G)|$ and $e(G)=|E(G)|$ the order and the size of $G$, respectively.
For a vertex $v \in V(G)$, we write $N_G(v)$ as its neighborhood, and its degree is defined as $d_G(v) = |N_G(v)|$. For a subgraph $G_0$ of $G$ and a vertex $v \in V(G_0)$, let $d_{G_0}(v)$ denote the degree of $v$ in $G_0$. The minimum degree of $G$ is denoted by $\delta(G)$. For disjoint vertex subsets $S_1, S_2  \subseteq V(G)$, define $E_G(S_1, S_2) = \{ xy \mid x \in S_1,\, y \in S_2,\, xy \in E(G) \}$, and let $e_G(S_1, S_2) = |E_G(S_1, S_2)|$. For two vertex-disjoint graphs $G_1$ and $G_2$, we use $G_1\cup G_2$
to denote the disjoint union of $G_1$ and $G_2$. The join of $G_1$ and $G_2$, denoted by $ G_1 \vee G_2$, is obtained from $G_1 \cup G_2$ by adding all possible edges between $ V(G_1)$ and $V(G_2)$. 
For a vertex subset $S \subseteq V(G)$, the subgraph induced by $S$ is denoted by $G[S]$ and let $G-S=G[V(G)\setminus S]$. For further details and related concepts, see \cite{BM}.

Let $G$ be a graph of order $n$ with vertex set $V(G) = \{ v_1, v_2, \dots, v_n \}$. The adjacency matrix of $G$ is the $n \times n$ $(0,1)$-matrix $A(G) = (a_{ij})$, where $a_{ij} = 1$ if $v_i$ and $v_j$ are adjacent in $G$ and $a_{ij} = 0$ otherwise. The largest eigenvalue of $A(G)$, denoted $\lambda(G)$, is called the spectral radius of $G$.

The distance matrix $D(G)$ of a connected graph $G$ is a matrix whose $(i,j)$-entry is the distance $d_G(v_i, v_j)$ between $v_i$ and $v_j$, and the distance is defined as the length of a shortest path from $v_i$ to $v_j$.  The largest
eigenvalue of the distance matrix $D(G)$ is called the distance spectral radius of
$G$, denoted by $\mu(G)$.

A factor is a spanning subgraph subject to specified conditions. The
study of factors in graphs has a long and rich history. Since Danish mathematician Petersen first attempted the study of factors in 1891, the graph factors theory play an important role in graph theory \cite{AK,C2,T2}. Studying the existence of graph factors helps reveal the structure of graphs and provides methods for solving practical problems, such as those in network design and combinatorial optimization. 
For integers $b\ge a\ge 1$, an $[a, b]$-factor of a graph $G$ is defined as a spanning subgraph $H$ such that $a\le d_{H}(v)\le b$ for every $v\in V(G)$.  If $a=b=k$, a $[k, k]$-factor is called a $k$-factor of $G$. In addition, a $1$-factor is also called a perfect matching. 

 Much effort has been devoted to finding sufficient conditions for the existence of $[a, b]$-factors in a graph by using various graphic parameters such as eigenvalues \cite{BH,C,CG,CGH,FL,FLL,FLLO,HL2,JLL,LXO}, 
 stability number \cite{KL}, size \cite{HL,HV,J}, toughness \cite{CFL,EJKS,LCLK}, independence number \cite{N2}. In 2021, Cho, Hyum, O and Park \cite{CHOP} posed the spectral version conjecture for the existence of $[a, b]$-factors in graphs.

\begin{conjecture}(\cite{CHOP})\label{con1}
	Let $G$ be an $n$-vertex graph and $b\ge a$ be two positive integers, where $na\equiv 0 \pmod{2}$ and $n \geq a + 1$. If 
	$$\lambda(G) > \lambda(K_{a-1} \lor (K_1 \cup K_{n-a})),$$
	then $G$ contains an $[a,b]$-factor.
\end{conjecture}
Over the past few years, researchers have studied this conjecture and obtained some results. Fan, Lin and Lu \cite{FLL} confirmed Conjecture \ref{con1} for $n\ge 3a+b-1.$ The full conjecture was confirmed by Wei and Zhang \cite{WZ}. They \cite{WZ} initially established the size condition that guarantee the existence of an $[a,b]$-factor in a graph, which they subsequently employed to prove a spectral radius condition.
\begin{theorem}\label{wz}(\cite{WZ})
	Let $a, b$ be two positive integers with $b \ge a$, and let $G$ be a graph of order $n$ with minimum degree $\delta(G) \ge a$, where $n \ge a+1$. If
		$$e(G) \ge \binom{n-1}{2} + \frac{a+1}{2}$$
	and $na \equiv 0 \pmod{2}$ when $a = b$, then $G$ has an $[a, b]$-factor.
\end{theorem}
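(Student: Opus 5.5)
We reduce to the $k$-factor case $a=b=k$ and argue by contradiction through Tutte's $f$-factor theorem. For $a<b$ an $a$-factor is already an $[a,b]$-factor. So it suffices to treat two cases: $a=b$ (parity assumed), and $a<b$, where we only need an $[a,a+1]$-factor. We use the Lovász/Tutte criterion. A graph $G$ has a $k$-factor if and only if, for all disjoint $S,T\subseteq V(G)$,
$$\delta(S,T)=k|S|-k|T|+\sum_{v\in T}d_{G-S}(v)-q(S,T)\ge 0,$$
where $q(S,T)$ is the number of components $C$ of $G-S-T$ with $k|C|+e_G(C,T)$ odd. For $[a,b]$ with $a<b$ there are no odd components, and the condition becomes $b|S|-a|T|+\sum_{v\in T}d_{G-S}(v)\ge 0$. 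Suppose $G$ has no such factor, and fix a violating pair $(S,T)$. It is standard that $\delta(S,T)\equiv k n\pmod 2$, so in the parity case $\delta(S,T)\le -2$. We write $s=|S|$ and $t=|T|$.

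The main step is to bound the number of missing edges $\overline{e}=\binom n2-e(G)$ from below. The hypothesis gives $\overline{e}\le n-1-\frac{a+1}{2}$, so it is enough to show $\overline{e}\ge n-\frac{a+1}{2}$, i.e. at least $n-1-\frac{a-1}{2}$ non-edges. The non-edges come from three sources. First, each $v\in T$ misses at least $t-1-d_{G[T]}(v)$ vertices of $T$ and $n-s-t-d_{G-S-T}(v)$ vertices outside $S\cup T$. Second, there are non-edges between distinct components of $G-S-T$. Third, the violation gives $\sum_{v\in T}d_{G-S}(v)<at-bs+q$, which says that $T$ sends few edges into $V\setminus S$. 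Also $t\ge 1$, since otherwise $\delta(S,T)\ge bs-q$ with $q\le$ (number of components), and one checks this cannot be negative once $G$ is this dense. The minimum degree $\delta\ge a$ gives $d_{G-S}(v)\ge a-s$ for every $v\in T$.

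We then split by $t$. If $t\ge a+1$ (or $t$ is large relative to $s$), the degree deficit forces each vertex of $T$ to miss roughly $n-1-s-(\text{small})$ vertices, so $\overline{e}\ge \frac12 t(n-s-a)$ or so. This is far more than $n$ unless $s$ is close to $n$. When $s$ is close to $n$, the inequality $bs<at$ gives $t>s$, which contradicts $s+t\le n$. In the critical range $1\le t\le a$, $\delta\ge a$ and the violation $\sum_{v\in T}d_{G-S}(v)\le at-bs+q-2$ force $s$ small, essentially $s\le t-1$. Each $v\in T$ then has at least $a-s$ neighbours in $G-S$, but the sum of these degrees is bounded. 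Counting non-edges from $T$ to $V\setminus(S\cup T)$ gives at least $t(n-s-t)-(at-bs+q)$ non-edges. Non-edges between the $q$ odd components add further, about $(q-1)(n-s-t-q+1)$ in the worst split.

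The main obstacle is the extremal case $t=1$, $s=a-1$ (or $s$ slightly smaller), which corresponds to the conjectured extremal graph $K_{a-1}\vee(K_1\cup K_{n-a})$. This graph has exactly $\binom{n-1}{2}+\frac{a-1}{2}$ edges, just below the threshold, so the counting must be tight. There I would optimize the quadratic lower bound $t(n-s-t)-at+bs-q+\dots$ over $0\le s$, $1\le t$, $s+t\le n$. I would use convexity in $t$ to check only the endpoints $t=1$ and $t=n-s$. For $t=1$ the bound reduces to $n-1-s-(a-s)$ together with parity and the $\delta\ge a$ correction. The parity gain from $\delta\le -2$ supplies exactly the extra $\frac{a+1}{2}$ versus $\frac{a-1}{2}$. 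I expect the delicate bookkeeping of $q$ and the small-$n$ cases ($n$ close to $a+1$, checked directly) to be the hardest part.
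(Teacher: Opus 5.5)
\textbf{There is a genuine gap.} The paper does not prove this statement; it is quoted from Wei and Zhang, so there is no in-paper proof to match. Judged on its own, your proposal is a plan rather than a proof, and the parts meant to handle the tight case are wrong.

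\emph{The extremal graph is misidentified.} The graph $K_{a-1}\vee(K_1\cup K_{n-a})$ is $K_n$ minus the $n-a$ edges at the $K_1$-vertex. So it has $\binom{n-1}{2}+a-1$ edges, not $\binom{n-1}{2}+\frac{a-1}{2}$, and its minimum degree is $a-1$. It is therefore excluded by $\delta(G)\ge a$ and says nothing about sharpness here. For $a\ge 3$ it even exceeds the threshold, which only shows that $\delta(G)\ge a$ cannot be dropped. Under $\delta(G)\ge a$, the configuration $t=1$, $s=a-1$ is not a violation at all when $a<b$, since $b(a-1)-a+d_{G-S}(v)\ge (a-1)(b-1)\ge 0$. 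For $a=b=k$ it requires $q\ge (k-1)^2+2$. So your claim that the parity gain from $\theta\le -2$ ``supplies exactly'' the difference between $\frac{a+1}{2}$ and $\frac{a-1}{2}$ has no basis; for $a<b$ there is no parity term at all.

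\emph{The structural claims also fail.}
From $d_{G-S}(v)\ge k-s$ and the violation you only get $s(k-t)\le q-2$, and from $\sum_{v\in T}d_{G-S}(v)\ge 0$ you get $s\le t+\frac{q-2}{k}$. Neither gives $s\le t-1$, so $s$ is governed by $q$, which you never bound. Your dichotomy ``$\frac12 t(n-s-a)$ is far more than $n$ unless $s$ is close to $n$'' is false. For $b=a+1$, $n=2a+2$, $t=a+2$, $s=a$, that bound equals $a+2<n-\frac{a+1}{2}$ once $a\ge 2$, although $s<n/2$.

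\emph{What is actually needed.} For $a<b$, the hypothesis $\delta(G)\ge a$ together with $bs-at+\sum_{v\in T}d_{G-S}(v)\le -1$ forces $s\ge 1$ and $t\ge b+1$. So your ``critical range'' $1\le t\le a$ contains no violating pair. Keeping the $bs$ term, you get
$\overline{e}\ge \frac12\sum_{v\in T}(n-1-s-d_{G-S}(v))\ge \frac12\bigl(t(n-1-s-a)+bs+1\bigr)$.
With $u=n-s-t$,
$t(n-1-s-a)+bs+1-(2n-a-3)=t(t-3-a)+u(t-2)+s(b-2)+a+4$,
which is positive because $t\ge b+1\ge a+2$ and $b\ge 2$. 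This contradicts $\overline{e}\le n-1-\frac{a+1}{2}$, and settles $a<b$ in a few lines.

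The real content is the case $a=b=k$, where the term $-q$ must be paid for by non-edges between components of $G-(S\cup T)$. That requires lower bounds on component sizes, of the kind the paper derives in the proof of Theorem~\ref{T2}. There one chooses the violating pair with $|S\cup T|$ maximal, so that every $v\in U$ has $d_{G-S}(v)\ge k+1$ and $e_G(v,T)\le k-1$. Hence every component has at least $\max\{3,k+2-t\}$ vertices, and one then bounds the non-edges between groups by convexity. Your proposal has no such ingredient. It also leaves the $q$-bookkeeping and the case $t=0$ as ``one checks''. Finally, it proposes to verify small $n$ ``directly'', which is not a finite check: $n$ near $a+1$ with $a$ arbitrary is an infinite family.
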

\begin{theorem}\label{wz2}(\cite{WZ})
	Let $a, b$ be two positive integers with $a \le b$, and let $G$ be a graph of order $n$, where $n \ge a+1$. If $$\lambda(G) > \lambda(K_{a-1} \vee (K_{n-a} \cup K_1))$$ and $na \equiv 0 \pmod 2$ when $a = b$, then $G$ has an $[a, b]$-factor.
\end{theorem}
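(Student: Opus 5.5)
The plan is to derive Theorem \ref{wz2} from the size condition in Theorem \ref{wz}. Put $H:=K_{a-1}\vee(K_{n-a}\cup K_1)$. I argue by contradiction: suppose $G$ has no $[a,b]$-factor, while $\lambda(G)>\lambda(H)$ and $na\equiv 0\pmod 2$ when $a=b$. I split according to whether $\delta(G)\le a-1$ or $\delta(G)\ge a$.

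\emph{Case $\delta(G)\le a-1$.} Let $v$ be a vertex of minimum degree. Its neighbourhood has at most $a-1$ vertices, so I can enlarge it to a set $S\subseteq V(G)\setminus\{v\}$ with $|S|=a-1$. Then $G$ is a spanning subgraph of the graph in which $S$ is a clique joined to everything, $v$ is adjacent exactly to $S$, and $V(G)\setminus(S\cup\{v\})$ is a clique. That graph is isomorphic to $H$. Since $H$ is connected, Perron--Frobenius monotonicity gives $\lambda(G)\le\lambda(H)$, which contradicts the hypothesis. Strictness is not an issue because the hypothesis is itself a strict inequality.

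\emph{Case $\delta(G)\ge a$.} Here all hypotheses of Theorem \ref{wz} hold, including the parity condition when $a=b$. So the absence of an $[a,b]$-factor forces
$$e(G)<\binom{n-1}{2}+\frac{a+1}{2},\qquad\text{hence}\qquad 2e(G)\le (n-1)(n-2)+a.$$
I then turn this into a spectral bound using the Hong--Shu--Fang (Nikiforov) inequality: for a graph with minimum degree at least $\delta$ and $m$ edges,
$$\lambda(G)\le \frac{\delta-1+\sqrt{(\delta+1)^2+4(2m-\delta n)}}{2}.$$
I will apply it with parameter $a$, using that the bound may be used with any value not exceeding the true minimum degree in the relevant range. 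Substituting $2m\le (n-1)(n-2)+a$ and $\delta=a$, the expression under the root becomes
$$(a+1)^2+4\bigl((n-1)(n-2)+a-an\bigr)=4n^2-4an-12n+a^2+6a+9=(2n-a-3)^2.$$
Hence $\lambda(G)\le \frac{a-1+2n-a-3}{2}=n-2$.

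On the other hand, $H$ contains $K_{a-1}\vee K_{n-a}=K_{n-1}$ as a proper subgraph, and $H$ is connected. Therefore $\lambda(H)>\lambda(K_{n-1})=n-2$. Combining, $\lambda(G)\le n-2<\lambda(H)$, again a contradiction.

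The main obstacle is the second case. Cruder bounds such as Hong's $\sqrt{2m-n+1}$ or Stanley's bound do not reach $n-2$ for $a\ge 2$, so the minimum-degree information really has to enter through the Hong--Shu--Fang type inequality. The delicate point is justifying its use with the parameter $a$ instead of the actual $\delta(G)$. For this I would check that the right-hand side is nonincreasing in $\delta$ in the regime $2m\le (n-1)(n-2)+a$, or cite the version of the inequality stated for every $k\le\delta(G)$. After that, the identity $(a+1)^2+4((n-1)(n-2)+a-an)=(2n-a-3)^2$ closes the argument neatly.
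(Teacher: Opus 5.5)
Your proof is correct and follows essentially the route the paper attributes to Wei and Zhang (size condition, then spectral condition), which the paper also uses for Theorem~\ref{T3}: the case $\delta(G)\le a-1$ is handled by embedding $G$ into $K_{a-1}\vee(K_{n-a}\cup K_1)$, and for $\delta(G)\ge a$ you combine Theorem~\ref{wz} with Lemmas~\ref{le:3.1} and~\ref{le:3.2} to get $\lambda(G)\le n-2$; the monotonicity in Lemma~\ref{le:3.2} needs only $2e(G)\le n(n-1)$, so your concern about using the parameter $a$ in place of $\delta(G)$ is settled. The one slip is at $a=1$: there $H=K_{n-1}\cup K_1$ is disconnected with $\lambda(H)=n-2$ exactly, so $\lambda(H)>n-2$ fails, but the chain $\lambda(G)\le n-2\le\lambda(H)$ still contradicts the strict hypothesis, and neither case actually needs $H$ to be connected.
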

 Hao and Li \cite{HL} determined the largest spectral radius among all $n$-vertex graphs forbidding $[a,b]$-factors and characterized the extremal graphs, which strengthened the result of Wei and Zhang \cite{WZ}. In addition, Hao and Li \cite{HL} observed that $\delta(G)\ge a$ is a necessary condition for the existence of an $[a,b]$-factor in a graph, and they posed the following problem.
\begin{problem} (\cite{HL})\label{prob1}
	Determine sharp lower bounds on the size or spectral radius of an $n$-vertex graph $G$ with $\delta(G) \geq a$ such that $G$ contains an $[a, b]$-factor.
\end{problem}
Recently, Problem \ref{prob1} has attracted widespread attention from scholars, and the spectral version of Problem \ref{prob1} has been solved by Tang, Zhang \cite{TZ} and Fan, Lin, Zhu \cite{FLZ}  for the cases $a = b$ and $a < b$, respectively. Fan, Lin and Zhu \cite{FLZ} also solved the size version of Problem \ref{prob1} for the case $a < b.$
\begin{theorem}\label{T-FLZ}(\cite{FLZ})
	Let $a$ and $b$ be two positive integers with $b > a$, and let $G$ be a connected graph of order $n \ge 4a + \frac{5b}{2} + 6$ with minimum degree $\delta(G) \ge a$. If
	\[
	e(G) \ge \binom{n - b - 1}{2} + ab + 2a,
	\]
	then $G$ contains an $[a, b]$-factor.
\end{theorem}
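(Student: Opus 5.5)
The natural route is Lovász's $(g,f)$-factor theorem specialized to $g=a$, $f=b$, argued by contradiction. Suppose $G$ is connected, $\delta(G)\ge a$, $e(G)\ge \binom{n-b-1}{2}+ab+2a$, and $G$ has no $[a,b]$-factor. Since $a<b$, the parity (odd-component) term disappears. So there are disjoint sets $S,T\subseteq V(G)$ with
\[
\delta(S,T):=b|S|+\sum_{v\in T}d_{G-S}(v)-a|T|\le -1 .
\]
Among all such pairs we choose one with $|T|$ minimal (and then $|S|$ maximal). The standard consequences are that $T\neq\emptyset$ and $d_{G-S}(v)\le a-1$ for every $v\in T$; otherwise deleting $v$ from $T$ would not increase $\delta$. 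Write $s=|S|$ and $t=|T|$. Because $\delta(G)\ge a$, each $v\in T$ has at least $a-d_{G-S}(v)\ge 1$ neighbours in $S$, so $s\ge 1$. Moreover, $\delta(S,T)\le -1$ gives $bs+1\le \sum_{v\in T}(a-d_{G-S}(v))\le at$, and hence $t\ge (bs+1)/a> s$.

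The plan is to bound $e(G)$ from above in terms of $s$, $t$ and $n$ and then show the bound falls below the hypothesis. Every edge touching $T$ lies either inside $T\cup(V\setminus(S\cup T))$, where it is counted through $\sum_{v\in T} d_{G-S}(v)\le at-bs-1$, or between $T$ and $S$, which contributes at most $st$. All remaining edges lie in $G[V\setminus T]$, which has at most $\binom{n-t}{2}$ edges. This gives
\[
e(G)\le \binom{n-t}{2}+st+at-bs-1 .
\]
Write $f(s,t)$ for the right-hand side. For fixed $t$ it is increasing in $s$ when $t\ge b$, and the constraint $s\le (at-1)/b$ bounds $s$. Substituting $s\approx (at-1)/b$ leaves a function of $t$ alone, dominated by the convex term $\binom{n-t}{2}$. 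On the admissible range $t\ge b+1$ its maximum is therefore attained at an endpoint.

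\textbf{Case $t\ge b+1$.} At $t=b+1$ we have $s\le a$ (using $s< t$ and $bs\le at-1=ab+a-1$), and the bound reads roughly $\binom{n-b-1}{2}+a(b+1)+a(b+1)-ab-1=\binom{n-b-1}{2}+ab+2a-1$. This is just below the hypothesis and explains the threshold. The other endpoint, $t$ large (up to about $n-s$), must be shown to give a far smaller value. This is where the hypothesis $n\ge 4a+\tfrac{5b}{2}+6$ is used: we compare the quadratic at the two ends.

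\textbf{Case $t\le b$.} Here $bs+1\le at\le ab$ forces $s\le a-1$, and each $v\in T$ has $d_G(v)\le s+a-1$. I would use connectivity and $\delta\ge a$ more carefully in this case, because the crude bound above is too weak when $t$ is small. A vertex of $T$ with $d_{G-S}(v)\le a-1$ can still force $\binom{n-t}{2}$ to be large. The cure is to bound $e(G)$ by $\binom{n-t}{2}+t(s+a-1)$ directly and to note that $t\ge (bs+1)/a\ge 1$ with $s\ge 1$ gives $t>b/a$. Expanding, $\binom{n-t}{2}\le\binom{n-1}{2}-(t-1)(n-t/2-1)$ should then fall below $\binom{n-b-1}{2}$ for $n$ large. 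I expect this case to be the main obstacle, specifically small $t$ such as $t=1$ or $2$. With $t=1$ we would need $b s+1\le a$, which is impossible since $s\ge1$ and $b>a$. So $t\ge 2$, and for general small $t$ the inequality $bs< at$ combined with $s\ge 1$ must be exploited sharply, together with the linear-in-$n$ loss $(t-1)n$ against the gap $\binom{n-1}{2}-\binom{n-b-1}{2}\approx bn$. If $t\le b$ does not already give enough room, I would refine using $s\ge \lceil (a-d_{G-S}(v))\rceil$ per vertex and the fact that $S$-vertices beyond those needed can be moved. The numerical constant $4a+5b/2+6$ suggests exactly this endpoint comparison.
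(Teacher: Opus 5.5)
The paper gives no proof of this statement; it quotes it from Fan, Lin and Zhu, so your proposal has to stand on its own. The frame is right. For $a<b$, Lovász's criterion has no parity term, and $e(G)\le\binom{n-t}{2}+st+at-bs-1$ is a valid bound. The genuine gap is the case $t\le b$, which you call the main obstacle. The inequality you hope for there is false: $\binom{n-1}{2}-(t-1)(n-\frac t2-1)$ is exactly $\binom{n-t}{2}$. For $t\le b$ this is at least $\binom{n-b}{2}>\binom{n-b-1}{2}$ for every $n$, so edge counting cannot close this case.

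The missing idea is that the case is empty. You already noted that each $v\in T$ has at least $a-d_{G-S}(v)$ neighbours in $S$. Bound the sum by $e_G(S,T)\le st$ rather than by $at$: $bs+1\le\sum_{v\in T}(a-d_{G-S}(v))\le e_G(S,T)\le st$. This says $s(t-b)\ge 1$, so every violating pair has $s\ge 1$ and $t\ge b+1$, with no minimality needed.

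In the remaining case your endpoint argument is incomplete, though it can be repaired.
\begin{itemize}
\item Setting $s=(at-1)/b$ gives the convex function $\binom{n-t}{2}+\frac{at^2-t}{b}$. This substitution respects $s+t\le n$ only for $t\le t^*:=\frac{bn+1}{a+b}$. Pushed to $t=n-1$, it gives about $\frac ab n^2$, which exceeds the threshold when, e.g., $a=b-1$.
\item For $t\ge t^*$ you must use $s\le n-t$ instead. This gives $\binom{n-t}{2}+(n-t)(t-b)+at-1$, whose derivative $a+b+\frac12-t$ is negative on that range. So its maximum is again at $t^*$.
\end{itemize}
The two endpoints are therefore the following.
\begin{itemize}
\item At $t=b+1$ the value is $\binom{n-b-1}{2}+ab+2a-1+\frac{a-1}{b}$ even before rounding, which is fine.
\item At $t=t^*$ the value is $\frac{(an-1)((a+2b)n+1-a-b)}{2(a+b)^2}$.
\end{itemize}
The second value is not ``far smaller'': for $a=b-1$ and $n=4a+\frac{5b}{2}+6$ it is about $15.8b^2$, against a threshold of about $16.1b^2$ for large $b$. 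The comparison does hold for all $n\ge 4a+\frac{5b}{2}+6$. After multiplying by $2(a+b)^2$, the difference is a quadratic in $n$ with leading coefficient $b^2$ that is positive and increasing at $n=4a+\frac{5b}{2}+6$. But this computation is where the hypothesis on $n$ is actually used, and you must carry it out.

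With these two repairs your argument becomes a complete proof, and it never uses connectivity.
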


In this paper,  we solve the size version of Problem \ref{prob1} for the case $a = b=k.$

\begin{theorem}\label{T2}
	Let $k, n$ be positive integers such that $k\ge1$, $kn\equiv 0 \pmod 2$ and $n\ge 12k+40$. Let $G$ be a graph of order $n$ with $\delta(G)\ge k$.  If
	$$e(G)\ge \binom{n-k-1}{2}+k^2+2k, $$ then $G$ has a $k$-factor.
\end{theorem}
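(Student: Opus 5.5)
We argue by contradiction via Tutte's $k$-factor theorem. Suppose $G$ has no $k$-factor. Since $kn$ is even, Tutte's theorem gives disjoint sets $S,T\subseteq V(G)$ with
\[
\delta(S,T)=k|S|-k|T|+\sum_{v\in T}d_{G-S}(v)-q(S,T)\le -2,
\]
where $q(S,T)$ counts the components $C$ of $G-(S\cup T)$ with $k|V(C)|+e_G(V(C),T)$ odd. (The parity of $\delta(S,T)$ always equals that of $kn$, so $\delta<0$ forces $\delta\le-2$.) Write $s=|S|$, $t=|T|$, $q=q(S,T)$. We may choose $(S,T)$ with $\delta(S,T)\le -2$ and, subject to this, with $|S|+|T|$ minimal or $T$ maximal, whichever is more convenient. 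The goal is to show $e(G)<\binom{n-k-1}{2}+k^2+2k$. The extremal example we expect is $K_k\vee(K_{n-k-1}\cup K_1)$-like modified to respect $\delta\ge k$: roughly $S=\emptyset$, $T$ a set of $k+1$ vertices of degree $k$ each, attached to a small part. The bound $k^2+2k$ suggests $T$ has $k+1$ vertices with all their edges going to a clique of size $n-k-1$, i.e.\ the graph $K_{n-k-1}$ plus $k+1$ vertices each with $k$ neighbours, and $\delta(\emptyset,T)$ negative. The plan is to compare with this configuration.

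\textbf{Step 1 (bound $e(G)$ by $s,t,q$).} Every edge lies either inside $V\setminus T$, between $S$ and $T$, inside $T$, or between $T$ and the rest. The Tutte inequality gives
\[
\sum_{v\in T}d_{G-S}(v)\le k(t-s)+q-2 .
\]
This sum controls all edges incident to $T$ except those going to $S$, which number at most $st$. Also $G-(S\cup T)$ has at least $q$ components, so
\[
e\bigl(G-T\bigr)\le \binom{s}{2}+s(n-s-t)+\binom{n-s-t-q+1}{2},
\]
the largest case being a single big component with the others as isolated vertices, joined fully to $S$. Combining,
\[
e(G)\le \binom{n-t-q+1}{2}+(\text{terms in }s)+st+k(t-s)+q-2 ,
\]
which is maximized by a convexity argument in the size of the big component.

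\textbf{Step 2 (use $\delta\ge k$ and $n$ large).} The key constraint is that $\sum_{v\in T}d_{G-S}(v)\ge t(k-s)$, since each vertex of $T$ has degree at least $k$. Hence $q\ge k t - ks - t(k-s)+2-\dots$, which links $q$ and $t$ and prevents tiny $T$. Similarly, each odd component $C$ that is small, say with $|C|\le k$, must send edges to $S\cup T$; this gives $q$ small components costing many vertices outside the big clique. We then case on $t$:
\begin{itemize}
\item[(i)] If $t\le k$, then $q\ge t$-ish fails unless $s$ is small. Analyse $s=0$ and $s\ge1$ separately.
\item[(ii)] If $t\ge k+1$, the vertices of $T$ and the small components are excluded from the big clique, costing about $(t+q-1)n$ edges relative to $\binom{n}{2}$. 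For $t+q\ge k+2$ and $n\ge 12k+40$ this drops below $\binom{n-k-1}{2}+k^2+2k$.
\end{itemize}
The boundary case $t+q-1=k+1$ with $s=0$ must be handled exactly. There the deficiency $-2$ forces $T$ to have $k+1$ vertices, each with $d_{G-S}\le k$ summing appropriately, and the count gives at most $\binom{n-k-1}{2}+k(k+1)+\lfloor\cdot\rfloor<\binom{n-k-1}{2}+k^2+2k$. This is presumably why the threshold has $+2k$ rather than $+k$.

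\textbf{Main obstacle.} The hard part is Step 2's case analysis when $s\ge1$ and $t$ is moderate. In that regime the edges from $S$ to everything give extra room, and the quadratic function of $s$ and $t$ must be bounded uniformly. I would handle it by treating
\[
f(s,t)=\binom{n-t-q'+1}{2}+\dots
\]
as a convex function and checking its endpoints: $s+t$ small versus $t$ near $n/2$. The linear-in-$n$ losses dominate once $n\ge12k+40$. The exactness of the constant $k^2+2k$ at the near-extremal configuration is the delicate point.
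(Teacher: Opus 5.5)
Your frame is right: Tutte's criterion, parity forcing $\theta\le -2$, splitting $E(G)$ around $T$, and $q\ge s(k-t)+2$ from $\delta\ge k$. But the step that actually makes the edge count close is missing.

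The paper takes $(S,T)$ with $\theta(S,T)\le-2$ and $|S\cup T|$ \emph{maximum}. It then proves that every $v\in U=V(G)\setminus(S\cup T)$ has $d_{G-S}(v)\ge k+1$ and $e_G(v,T)\le k-1$. Otherwise, moving $v$ into $T$ (resp.\ $S$) raises $\theta$ by at most $1$, since $q$ drops by at most $1$; by parity $\theta$ is still $\le-2$ with a larger $S\cup T$. Hence every component of $G-(S\cup T)$ has at least $m=\max\{3,k+2-t\}$ vertices. The non-edges between components then number $L\ge m(q-1)(u-\tfrac{mq}{2})$, and $e(G)\le\binom{n-t}{2}+kt+s(t-k)+q-2-L$.

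Your Step 1 bound $\binom{n-s-t-q+1}{2}$ on the edges inside $U$ permits singleton odd components and is too weak as stated. For $s=t=0$, $q=2$ it gives only $e(G)\le\binom{n-1}{2}$. The repair you sketch (a small component must send edges to $S\cup T$) yields only $|C|\ge k+1-s-t$. That is vacuous once $s+t\ge k$, which is exactly the range $t\ge k+1$ where the paper needs $|C|\ge 3$ to get $L\ge tq$ and absorb the term $\frac{t}{k}(q-2)$. The minimal choice of $|S|+|T|$ you float does not yield this lemma; it needs the maximal choice.

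Your picture of the sharp case is also wrong, which is why the regime $t\ge k+1$ is not under control. If $S=\emptyset$ and $T$ consists of vertices of degree at least $k$, then $\theta(\emptyset,T)\ge -q$. In $K_{n-k-1}$ plus $k+1$ vertices of degree $k$, the parity gives $q=0$, so $\theta(\emptyset,T)=0$, not negative. In general $s=0$ forces $q\ge 2$, so the boundary case you describe ($s=0$, $|T|=k+1$, $t+q-1=k+1$, hence $q=1$) cannot occur.

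The true extremal graph is $K_k\vee(K_{n-2k-1}\cup(k+1)K_1)$ plus $k-1$ edges from one outer vertex into $K_{n-2k-1}$. Here $S=V(K_k)$, $|T|=k+1$, $q=1$ and $\theta=-2$. The $k(k+1)$ edges of $E(S,T)$ are exactly the $k^2+k$ in the bound. So any heuristic that charges each vertex of $T$ about $n$ lost edges, ignoring $E(S,T)$, misses precisely the terms that decide sharpness.

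The ingredient you need is $k(s-t)\le q-2$, which follows from $\sum_{v\in T}d_{G-S}(v)\ge 0$. The cases then go as follows:
\begin{itemize}
\item If $q\le 1$, it gives $s\le t-1$. Together with $q-2-L\le -1$ this yields $e(G)\le\binom{n-t}{2}+t(t-1)+k-1$. This equals $\binom{n-k-1}{2}+k^2+2k-1$ at $t=k+1$ and is no larger for $k+1<t\le n/4$.
\item If $q\ge 2$ and $k+1\le t\le n/4$, combine $s\le t+\frac{q-2}{k}$ with $L\ge tq$.
\item If $t>n/4$, use $s\le n-t$ and bound the resulting quadratic in $t$; this is where $n\ge 12k+40$ enters.
\item If $t\le k$, use $m=k+2-t$ (or $m=3$ when $t=k$).
\end{itemize}
Your ``main obstacle'' paragraph leaves all of these open, so as it stands the proposal is a plan rather than a proof.
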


Combining Theorems \ref{T-FLZ} and \ref{T2}, we can obtain the following result, which completely resolves  the size condition of Problem \ref{prob1}.
\begin{theorem}\label{T-2.1}
	Let $a$ and $b$ be two positive integers with $b\ge a$, and let $G$ be a connected graph of order $n \ge \frac{19a}{2} + \frac{5b}{2} +40 $ with $\delta(G) \ge a$. If
	\[
	e(G) \ge \binom{n - b - 1}{2} + ab + 2a,
	\]
and $an\equiv 0 \pmod 2$ when $a=b$,	then $G$ contains an $[a, b]$-factor.
\end{theorem}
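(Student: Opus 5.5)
The statement should follow by splitting on whether $a<b$ or $a=b$ and invoking Theorem \ref{T-FLZ} or Theorem \ref{T2}. All that needs checking is that the hypotheses of Theorem \ref{T-2.1} imply those of the relevant theorem.

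\emph{Case $b>a$.} We apply Theorem \ref{T-FLZ} directly. Its hypotheses are connectivity, $\delta(G)\ge a$, the same edge bound $e(G)\ge\binom{n-b-1}{2}+ab+2a$, and $n\ge 4a+\frac{5b}{2}+6$. The first three are assumed verbatim. For the order bound, $\frac{19a}{2}+\frac{5b}{2}+40\ge 4a+\frac{5b}{2}+6$ holds since $a\ge1$. Hence $G$ contains an $[a,b]$-factor.

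\emph{Case $a=b=:k$.} The edge condition becomes
\[
e(G)\ge\binom{n-k-1}{2}+k^2+2k,
\]
which is exactly the hypothesis of Theorem \ref{T2}. We also have $\delta(G)\ge k$, and the parity assumption $kn\equiv 0\pmod 2$ is imposed in this case. The order bound becomes $n\ge \frac{19k}{2}+\frac{5k}{2}+40=12k+40$, matching Theorem \ref{T2} exactly; this explains the constant $\frac{19a}{2}$. Theorem \ref{T2} does not need connectivity, so the extra assumption is harmless. Thus $G$ has a $k$-factor, which is an $[a,b]$-factor.

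There is no real obstacle here beyond bookkeeping. The only points needing care are the two inequalities between order thresholds and the remark that the parity condition is needed only when $a=b$.
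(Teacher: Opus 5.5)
Your proposal is correct and takes the same approach as the paper, which obtains Theorem~\ref{T-2.1} by combining Theorem~\ref{T-FLZ} for $a<b$ with Theorem~\ref{T2} for $a=b=k$. Your hypothesis checks are exactly the bookkeeping this combination needs: $\frac{19a}{2}+\frac{5b}{2}+40=12k+40$ when $a=b=k$, the order threshold dominates $4a+\frac{5b}{2}+6$ when $a<b$, and connectivity is superfluous for Theorem~\ref{T2}.
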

{\bf Remark 1:} 
Compared with Theorem \ref{wz}, if $a\le b$ and $n\ge a+\frac{b}{2}+3$, then
\begin{align*}
	&\quad\binom{n-1}{2}+\frac{a+1}{2}-\bigl(\binom{n - b - 1}{2} + ab + 2a\bigr)=\frac{b}{2}(2n-(b+3)-\frac{a}{b}(2b+3)+\frac{1}{b})>0.
\end{align*}
Hence Theorem \ref{T-2.1} improves Theorem \ref{wz}. In fact, the result of Theorem \ref{T-2.1} achieves the best possible condition. Suppose that $H_n^{a,b}$ is a graph obtained from $K_a \vee (K_{n-a-b-1} \cup (b+1)K_1)$ by adding $a-1$ edges between one vertex in $(b+1)K_1$ and $a-1$ vertices in $K_{n-a-b-1}$. Note that $$e(H_n^{a,b})=\binom{n - b - 1}{2} + ab + 2a-1. $$
We can show that $H_n^{a,b}$ contains no $[a,b]$-factors. In fact, if $H_n^{a,b}$ contains a $[a,b]$-factor $F$, then there are at least $(b+1)a-(a-1)=ab+1$ edges in $F$ connecting   $V((b+1)K_1)$ to $V(K_a)$. It follows that $d_F(w)>b$ for some vertex $w$ in $V(K_a)$, which contradicts $ a\le d_F(w)\le b$. Hence the
condition in Theorem \ref{T-2.1} is best possible.

Using Theorem \ref{T-2.1}, we also characterize the spectral radius and distance spectral radius conditions for a graph to have a $k$-factor, respectively.

\begin{theorem}\label{T3}
	Let $a, b$ be two positive integers with $b\ge a$. Let $G$ be a connected graph of order $n \ge \frac{19a}{2} + \frac{5b}{2}+\frac{ab}{2} +40 $ with  $\delta(G) \ge a$. If
	\[
	\lambda(G) \ge n - b - 1,
	\]
and $an\equiv 0 \pmod 2$ when $a=b$,	then $G$ contains an $[a, b]$-factor.
\end{theorem}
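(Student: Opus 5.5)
We reduce Theorem \ref{T3} to the size condition of Theorem \ref{T-2.1} by turning the spectral hypothesis into an edge count. Assume for contradiction that $G$ is connected, satisfies $\delta(G)\ge a$ and the parity condition, has $\lambda(G)\ge n-b-1$, but contains no $[a,b]$-factor. Since $n$ exceeds the bound required in Theorem \ref{T-2.1}, that theorem gives
$$e(G)\le \binom{n-b-1}{2}+ab+2a-1 .$$

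To get a contradiction we use a spectral bound that involves both $e(G)$ and $\delta(G)$. The natural choice is the Hong--Shu--Fang / Nikiforov inequality, valid for connected graphs with minimum degree $\delta$:
$$\lambda(G)\le \frac{\delta-1+\sqrt{(\delta+1)^2+4(2e(G)-\delta n)}}{2}.$$
The right-hand side is decreasing in $\delta$ as long as $2e(G)\le n(n-1)$, so we may substitute $\delta=a$. Substituting the upper bound on $e(G)$, we aim to show
$$\frac{a-1+\sqrt{(a+1)^2+4\bigl(2\binom{n-b-1}{2}+2ab+4a-2-an\bigr)}}{2}<n-b-1 .$$
Set $N=n-b-1$, so that $2\binom{N}{2}=N^2-N$. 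The inequality is then equivalent to
$$(a+1)^2+4\bigl(N^2-N+2ab+4a-2-an\bigr)<(2N-a+1)^2 .$$
Expanding both sides and cancelling $4N^2$, this reduces to a linear inequality in $n$. The main terms are $-4an$ on the left and $-4aN=-4a(n-b-1)$ on the right. After cancellation the requirement becomes
$$8ab+16a-8+4a(b+1)-4an<-2(a-1)\cdot 2N\ \text{(up to lower-order terms)}.$$
This has the form $4a(\text{something linear in } b)<\dots$ and holds as soon as $n$ is roughly larger than a constant multiple of $b+a+ab/a$. The extra $\tfrac{ab}{2}$ in the hypothesis $n\ge \frac{19a}{2}+\frac{5b}{2}+\frac{ab}{2}+40$ should give enough room to absorb the $8ab$-type terms. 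This strict inequality contradicts $\lambda(G)\ge n-b-1$, which proves the theorem.

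The main obstacle is making this final inequality rigorous. The algebra is elementary but easy to get wrong: we must track the sign of each term, and we must check that the monotonicity in $\delta$ really applies, which requires $2e(G)\le n(n-1)$ (trivially true). If the Hong-type bound turns out to be too weak for small $a$, there is a fallback. For $a=1$, or whenever the inequality is tight, we would instead use the cruder estimate $\lambda(G)\le\sqrt{2e(G)-n+1}$ from Hong's bound for graphs without isolated vertices. Alternatively we would compare $G$ directly with the extremal graph $H_n^{a,b}$, checking that its spectral radius is below $n-b-1$.
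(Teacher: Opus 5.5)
Your strategy is the paper's: bound $\lambda(G)$ by the Hong--Shu--Fang/Nikiforov inequality, use monotonicity to replace $\delta(G)$ by $a$, and compare with the size threshold of Theorem~\ref{T-2.1}. The paper argues forward, deriving $e(G)>\binom{n-b-1}{2}+ab+2a$ from $\lambda(G)\ge n-b-1$. You argue contrapositively, which is equivalent.

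The gap is the final inequality, which is the only real content of the proof. Your displayed reduction of it is wrong in three places:
\begin{itemize}
\item Since $n=N+b+1$, you have $-4an=-4aN-4a(b+1)$. The term $4a(b+1)$ must therefore enter the left side with a minus sign.
\item You dropped the $4a$ coming from $(a+1)^2-(a-1)^2$.
\item You dropped the $-4N$ coming from $-4(N^2-N)$.
\end{itemize}
Taken literally, your inequality would require $n>2ab+4a+b-1$. The hypothesis does not supply this; take $a=b$ large. Relatedly, an $\frac{ab}{2}$ margin cannot ``absorb $8ab$-type terms'', so your heuristic that the theorem's bound leaves enough room is not justified as stated.

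Here is the correct expansion. With $N=n-b-1$,
\[
(2N-a+1)^2-(a+1)^2-4\bigl(N^2-N+2ab+4a-2-an\bigr)=8N-4ab-16a+8 .
\]
Squaring is legitimate because $2N-a+1>0$. So the strict inequality you need is equivalent to $N>\frac{ab}{2}+2a-1$, that is, $n>\frac{ab}{2}+2a+b$.

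This is exactly where the $\frac{ab}{2}$ in the hypothesis comes from. The $8ab$ cancels partially against the $-4ab$ from rewriting $-4an$, leaving $4ab$ against $8N$. The condition holds with plenty of room under $n\ge\frac{19a}{2}+\frac{5b}{2}+\frac{ab}{2}+40$.

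With this computation in place your argument is complete. It also handles the non-strict hypothesis $\lambda(G)\ge n-b-1$ cleanly: you obtain $\lambda(G)<n-b-1$ for graphs without the factor. The paper instead writes $n-b-1<\lambda(G)$, although the hypothesis is only $\lambda(G)\ge n-b-1$.

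Your fallbacks are unnecessary. For $a=1$ the bound with $\delta=1$ already is $\sqrt{2e(G)-n+1}$, and no comparison with $H_n^{a,b}$ is needed.
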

{\bf Remark 2:} Note that $K_{n-1}$ is a spanning subgraph of $K_{a-1} \vee (K_{n-a} \cup K_1)$. Compared with Theorem \ref{wz2}, we have $$\lambda(K_{a-1} \vee (K_{n-a} \cup K_1))>\lambda(K_{n-1})=n-2\ge n-b-1, \quad b\ge a\ge 1.$$
Hence, we slightly improve the lower bound of Theorems \ref{wz2} by restricting the condition of $\delta(G)\ge a$.
\begin{theorem}\label{T4}
	Let $a, b$ be two positive integers with $b\ge a$. Let $G$ be a connected graph of order $n \ge \frac{19a}{2} + \frac{5b}{2} +40 $ with  $\delta(G) \ge a$. If
	\[
	\mu(G) \le n+2b+1-\frac{(b+2a+1)(b+2)}{n},
	\]
and $an\equiv 0 \pmod 2$ when $a=b$,	then $G$ contains an $[a, b]$-factor.
\end{theorem}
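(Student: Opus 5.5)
We prove Theorem \ref{T4} by reducing it to the size condition of Theorem \ref{T-2.1}. The idea is to bound $\mu(G)$ from below by a quantity that decreases as $e(G)$ increases. The hypothesis on $\mu(G)$ will then force $e(G)\ge \binom{n-b-1}{2}+ab+2a$, and Theorem \ref{T-2.1} gives the factor.

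\emph{Step 1: Rayleigh quotient with the all-ones vector.} Taking $\mathbf{1}$ as a test vector gives
\[
\mu(G)\ \ge\ \frac{\mathbf{1}^TD(G)\mathbf{1}}{n}=\frac{2W(G)}{n},
\]
where $W(G)$ is the Wiener index.

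\emph{Step 2: a lower bound on $W(G)$.} For a connected graph, every nonadjacent pair is at distance at least $2$. Hence
\[
W(G)\ \ge\ e(G)+2\Bigl(\binom{n}{2}-e(G)\Bigr)=n(n-1)-e(G).
\]
This gives
\[
\mu(G)\ \ge\ 2(n-1)-\frac{2e(G)}{n}.
\]

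\emph{Step 3: combine with the hypothesis.} Substituting into $\mu(G)\le n+2b+1-\frac{(b+2a+1)(b+2)}{n}$ and multiplying by $n$ yields
\[
2e(G)\ \ge\ n^2-(2b+3)n+(b+2a+1)(b+2).
\]
Now we expand the target. We have
\[
2\binom{n-b-1}{2}+2ab+4a=(n-b-1)(n-b-2)+2ab+4a=n^2-(2b+3)n+(b+1)(b+2)+2a(b+2).
\]
Since $(b+1)(b+2)+2a(b+2)=(b+2a+1)(b+2)$, the two right-hand sides coincide exactly. Therefore $e(G)\ge\binom{n-b-1}{2}+ab+2a$.

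\emph{Step 4: conclude.} The graph $G$ is connected, has $\delta(G)\ge a$ and order $n\ge \frac{19a}{2}+\frac{5b}{2}+40$, and satisfies the parity condition when $a=b$. So Theorem \ref{T-2.1} applies and $G$ has an $[a,b]$-factor.

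The only point needing care is Step 2. Connectivity is essential there, since it makes $D(G)$ well defined and gives the bound of $2$ on the distance between nonadjacent vertices. The exact algebraic match in Step 3 shows that the bound in Theorem \ref{T4} was chosen for precisely this argument, so no further obstacle is expected.
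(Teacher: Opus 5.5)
Your proposal is correct and follows the paper's proof. The Rayleigh quotient with $\mathbf{1}$ gives $\mu(G)\ge 2W(G)/n$, the bound $W(G)\ge n(n-1)-e(G)$ turns the hypothesis into $e(G)\ge\binom{n-b-1}{2}+ab+2a$, and Theorem \ref{T-2.1} finishes; your check that $(b+1)(b+2)+2a(b+2)=(b+2a+1)(b+2)$ supplies the algebra the paper leaves implicit.
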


\section{Proof of Theorem \ref{T2}}

In this section, we prove Theorem \ref{T2}, which characterizes the size condition for a graph to contain an $k$-factor. For two disjoint subsets $S,T\subseteq V(G)$, let $q(S, T)$ denote the number of components $C$ in  $G - (S \cup T)$ such that  $e_G(V(C), T) + k|V(C)| \equiv 1 \pmod 2 $, and let
\[
\theta(S, T) = k|S|- k|T| + \sum_{v \in T} d_{G-S}(v)  - q(S, T).
\]

In 1952, Tutte \cite{T} established the following well-known necessary and sufficient condition for a graph $G$ to have a $k$-factor. This fundamental theorem laid the foundation for subsequent research on $k$-factors.
\begin{theorem}(\cite{T} )\label{T2.1}
	Let $G$ be a graph and $k$ be a positive integer. Then $G$ has a $k$-factor if and only if $$\theta(S, T) = k|S|- k|T| + \sum_{v \in T} d_{G-S}(v)  - q(S, T) \ge 0$$ for all disjoint subsets $S, T \subseteq V(G)$. Moreover, $\theta(S, T) \equiv k|V(G)| \pmod 2$ for all disjoint subsets $S, T \subseteq V(G)$.
\end{theorem}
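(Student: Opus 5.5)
The statement has two parts: the parity identity and the equivalence. I will treat them separately, first the parity, then necessity by direct counting against a given $k$-factor, then sufficiency via a reduction to Tutte's $1$-factor theorem.

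\emph{Parity.} Write $\mathcal C$ for the set of components of $G-(S\cup T)$. Each edge of $G-S$ at a vertex of $T$ either lies inside $T$ or goes to some $C\in\mathcal C$. Hence
$$\sum_{v\in T}d_{G-S}(v)=2e(G[T])+\sum_{C\in\mathcal C}e_G(V(C),T).$$
By the definition of $q(S,T)$,
$$q(S,T)\equiv\sum_{C\in\mathcal C}\bigl(e_G(V(C),T)+k|V(C)|\bigr)\pmod 2 .$$
Substituting both into $\theta(S,T)$, the terms $e_G(V(C),T)$ cancel mod $2$ and $-k|T|\equiv k|T|$. We are left with
$$\theta(S,T)\equiv k|S|+k|T|+k\sum_{C}|V(C)|=k|V(G)| \pmod 2 .$$

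\emph{Necessity.} Let $F$ be a $k$-factor. Summing $F$-degrees over $S$ gives
$$k|S|\ge e_F(S,T)+\sum_C e_F(V(C),S).$$
Summing $F$-degrees over $T$, and noting that the $G$-edges between $T$ and $C$ not used by $F$ are missing from $F$, gives
$$k|T|\le e_F(T,S)+\sum_{v\in T}d_{G-S}(v)-\sum_C\bigl(e_G(V(C),T)-e_F(V(C),T)\bigr).$$
Subtracting the second inequality from the first yields
$$\theta(S,T)\ge \sum_C\bigl(e_F(V(C),S)+e_G(V(C),T)-e_F(V(C),T)\bigr)-q(S,T),$$
and every summand on the right is nonnegative. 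For a component $C$ counted by $q$, the degree sum in $F$ gives
$$k|V(C)|=2e(F[V(C)])+e_F(V(C),S)+e_F(V(C),T).$$
Therefore $e_F(V(C),S)+e_F(V(C),T)\equiv k|V(C)|\equiv e_G(V(C),T)+1$. It follows that the summand for $C$ is odd, hence at least $1$, so the sum is at least $q(S,T)$ and $\theta(S,T)\ge0$.

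\emph{Sufficiency.} I would use Tutte's gadget, which turns $k$-factors of $G$ into perfect matchings of an auxiliary graph $G^*$.
\begin{itemize}
\item For each vertex $v$, create an ``outer'' vertex $v_e$ for every edge $e\ni v$ and an ``inner'' set $I_v$ of $d_G(v)-k$ vertices. This uses $\delta(G)\ge k$; if $\delta(G)<k$, take $S=\emptyset$, $T=\{v\}$ to get $\theta<0$.
\item Join $I_v$ completely to $\{v_e: e\ni v\}$.
\item For each edge $e=uv$, join $u_e$ to $v_e$.
\end{itemize}
A perfect matching of $G^*$ matches $d_G(v)-k$ of the outer vertices at $v$ into $I_v$. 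The remaining $k$ outer vertices at $v$ are matched across edges, and those edges form a $k$-factor of $G$. So if $G$ has no $k$-factor, Tutte's $1$-factor theorem gives a barrier $X\subseteq V(G^*)$ with $o(G^*-X)>|X|$.

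\emph{Main obstacle.} The key step is normalizing the barrier. First I would show that $X$ can be chosen to be a union of whole ``sides'': for each $v$, either $X\supseteq I_v$, or $X$ contains all outer vertices at $v$, or $X$ meets the gadget at $v$ in nothing. Each such modification should not decrease the deficiency $o(G^*-X)-|X|$, which requires checking how moving one vertex into or out of $X$ changes the number of odd components. Then I would set
$$T=\{v: I_v\subseteq X\},\qquad S=\{v:\text{all outer vertices at }v\text{ lie in }X\}.$$
With this choice, $|X|$ and the odd components of $G^*-X$ should correspond term by term to $k|S|$, $\sum_{v\in T}d_{G-S}(v)-k|T|$ and $q(S,T)$, giving
$$\theta(S,T)\le |X|-o(G^*-X)<0.$$
Verifying this bookkeeping is where I expect the work. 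If the normalization proves messy, my fallback is the Lov\'asz-style argument: take $F\subseteq G$ with $\Delta(F)\le k$ and maximum size, grow alternating trails from a deficient vertex, and read $S$ and $T$ off the vertices reachable with the two parities.
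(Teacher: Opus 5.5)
The paper gives no proof of this statement; it quotes it from Tutte. Your proposal therefore has to stand on its own.

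Your parity computation and your necessity argument are complete and correct. In particular, each summand $e_F(V(C),S)+e_G(V(C),T)-e_F(V(C),T)$ is nonnegative, and odd for the components counted by $q(S,T)$. Your gadget $G^*$ does turn $k$-factors of $G$ into perfect matchings of $G^*$.

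The gap is the sufficiency direction itself. The normalization of the barrier and the identity between $|X|-o(G^*-X)$ and $\theta(S,T)$ are only announced (``should not decrease'', ``should correspond''), and that is the whole content of the reduction. Nothing you wrote is wrong, and the step closes as follows.

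Write $O_v=\{v_e:e\ni v\}$. The only tool needed is this: if $x\in X$ and the neighbours of $x$ outside $X$ lie in at most two components $H_1,H_2$ of $G^*-X$, then releasing $x$ from $X$ does not decrease $o(G^*-X)-|X|$. Indeed $|X|$ drops by one, and since the merged component has order $1+|H_1|+|H_2|$, at most one odd component is lost.

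Start from any $X$ with $o(G^*-X)>|X|$ and treat the gadgets one at a time; releases at $v$ do not change $X$ on other gadgets.
\begin{enumerate}
\item[(i)] If $I_v$ meets $X$ without being contained in it, release $I_v\cap X$. The neighbours outside $X$ lie in $O_v\setminus X$, which any inner vertex outside $X$ joins into one component.
\item[(ii)] If $I_v\subseteq X$ (vacuously true when $d_G(v)=k$), release all of $O_v\cap X$. Then $u_e$ is the only possible neighbour of $v_e$ outside $X$.
\item[(iii)] If $I_v\neq\emptyset$, $I_v\cap X=\emptyset$ and $O_v\not\subseteq X$, release all of $O_v\cap X$. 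Their neighbours outside $X$ lie in the component containing $I_v$ and the one containing $u_e$.
\end{enumerate}

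Afterwards each gadget meets $X$ in exactly $O_v$ (put $v\in S$), exactly $I_v$ (put $v\in T$), or nothing with $d_G(v)>k$ (put $v\in U$). This also repairs two defects in your definitions.
\begin{itemize}
\item As written, $S$ and $T$ can overlap: when $X$ contains a whole gadget, or when $d_G(v)=k$ and $O_v\subseteq X$.
\item A vertex of degree exactly $k$ whose gadget misses $X$ must go into $T$, not $U$. With $I_v=\emptyset$, its outer vertices are not connected to one another.
\end{itemize}

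The count is then exact. First, $|X|=\sum_{v\in S}d_G(v)+\sum_{v\in T}(d_G(v)-k)$. The odd components of $G^*-X$ are:
\begin{itemize}
\item the $d_G(v)-k$ isolated inner vertices of each $v\in S$;
\item the $e_G(S,T)$ isolated outer vertices $v_e$ with $v\in T$ and the other end of $e$ in $S$;
\item for each component $C$ of $G[U]$ with $k|V(C)|+e_G(V(C),T)$ odd, the union of the gadgets of $C$ together with the $e_G(V(C),T)$ outer vertices of $T$-vertices attached to it. This union has order $\equiv k|V(C)|+e_G(V(C),T)\pmod 2$.
\end{itemize}
The pairs $\{v_e,w_e\}$ with $v,w\in T$ are even. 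Hence $|X|-o(G^*-X)=k|S|-k|T|+\sum_{v\in T}d_{G-S}(v)-q(S,T)=\theta(S,T)<0$, as required.
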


Now, we prove Theorem \ref{T2}.

\begin{proof}[\bf Proof of Theorem \ref{T2}]  %
		Let $B_k(n)= \binom{n-k-1}{2}+k^2+2k-1$. By the assumption of Theorem \ref{T2}, we have
	\begin{equation}\label{bk}
		e(G)\ge B_k(n)+1.
	\end{equation}
Our proof proceeds by contradiction. We assume that $G$ contains no $k$-factor, which yields $e(G)\le B_k(n).$ This contradicts assumption of Theorem \ref{T2} that $e(G)\ge B_k(n)+1$, thus completing the proof.
	
For disjoint subsets $S, T \subseteq V(G)$, let $q(S, T)$ be the number of $\{C: C$ is a component of $G - (S \cup T)$ such that $k|V(C)|+e_G(V(C), T) \equiv 1 \pmod{2}\}$.
Let $$\theta(S,T)=k|S| - k|T| + \sum_{v \in T} d_{G-S}(v) - q(S, T).$$ Then we have the following claim.

{\bf Claim 1.} $\theta(S, T) \equiv 0 \pmod{2}$ for any disjoint subsets $S, T \subseteq V(G)$. 
\begin{proof} Since $kn \equiv 0 \pmod{2}$ and $\theta(S,T) \equiv kn \pmod{2}$ due to Theorem \ref{T2.1}, we have $\theta(S, T) \equiv 0 \pmod{2}$ for any disjoint subsets $S, T \subseteq V(G)$. 
\end{proof}
Suppose that $G$ is a connected graph with $\delta(G)\ge k$ that has no $k$-factor, where $k\ge 1$. By Theorem \ref{T2.1} and Claim 1, there exist disjoint subsets $S,T \subseteq V(G)$ such that $\theta(S,T)=k|S| - k|T| + \sum_{v \in T} d_{G-S}(v) - q(S, T)\le -2.$  We can choose $S$ and $T$ such that $|S\cup T|$ is maximum. Let $s=|S|$, $t=|T|$ and $q=q(S,T)$. Then
\begin{equation}\label{eq:1}
	\theta(S,T)\le -2 \iff \sum_{v \in T} d_{G-S}(v) \leq kt - ks + q-2.
\end{equation}
 
Since $\delta(G)\ge k$, we have $\sum_{v \in T}d_{G-S}(v)\ge (k-s)t$. By (\ref{eq:1}), we have
\begin{equation}\label{eq:3}
	q\ge s(k-t)+2.
\end{equation}

Note that $\sum_{v \in T}d_{G-S}(v)\ge 0$, by (\ref{eq:1}), we have $ ks\le kt+q-2$, which implies
\begin{equation}\label{eq:4}
	s\le t+\frac{q-2}{k}.
\end{equation}

Let $U=G-(S\cup T)$ and $m=\max\{3, k+2-t\}$.

{\bf Claim 2.} For any $v\in U$, we have
		$d_{G-S}(v)\ge k+1$ and  $e_G(v,T)\le k-1.$ Hence $|C|\ge m=\max\{3, k+2-t\}$ for any component $C\in G[U].$
\begin{proof}
	Otherwise, assume that $d_{G-S}(x)\le k$ for some $x\in U$. Let $T_1 = T \cup \{x\}$ and $q'=q(S,T_1)$. Then $|S \cup T_1|=|S \cup T|+1$ and $q'\ge q-1$. By (\ref{eq:1}), we obatin
	\[
	\begin{aligned}
		\theta(S, T_1)&=k|S| - k|T_1| + \sum_{v \in T_1} d_{G-S}(v) - q' \\
		& =ks-k(t+1)+\sum_{v \in T} d_{G-S}(v)+d_{G-S}(x)-q'\\
		&\le ks - kt-k+\sum_{v \in T} d_{G-S}(v)+k-(q-1) \quad \text{(since $q'\ge q-1$)}\\
		&= \theta(S, T)+1\le -1\quad (\text{by } (\ref{eq:1})).
	\end{aligned}
	\]
By Claim 1, we have $\theta(S, T_1)\le -2$, which contradicts  the maximality of $|S \cup T|$.	Hence $d_{G-S}(v)\ge k+1$ for any $v\in U$.
	
	Suppose that $e_G(y,T)\ge k$ for some $y\in U$.  Let $S_1 = S \cup \{y\}$ and $q''=q(S_1,T)$. Then $|S_1 \cup T|=|S \cup T|+1$ and $q''\ge q-1$. By (\ref{eq:1}), we obtain
	\[
	\begin{aligned}
		\theta(S_1, T)&=k|S_1| - k|T| + \sum_{v \in T} d_{G-S_1}(v) - q'' \\
		& =k(s+1)-kt+\sum_{v \in T} d_{G}(v)-e_G(S_1, T)-q''\\
		&=k(s+1)-kt+\sum_{v \in T} d_{G}(v)-(e_G(S, T)+e_G(y, T))-q''\\
		&\le ks+k-kt+\sum_{v \in T} d_{G-S}(v)-k-(q-1) \quad \text{(since $e_G(y,T)\ge k$ and $q'\ge q-1$)}  \\
		&\le \theta(S, T)+1\le -1 \quad (\text{by } (\ref{eq:1})).
	\end{aligned}
	\]
	By Claim 1, we have $\theta(S_1, T)\le -2$, which contradicts  the maximality of $|S \cup T|$. Hence $e_G(v,T)\le k-1$ for any $v\in U$.
	
	Let $C$ be a component of $G[U]$. Note that $e_G(v,T)\le t$ and $	d_{G-S}(v)=d_C(v)+e_G(v,T)$ for $v\in C$. Then 
	\[
	d_C(v)=d_{G-S}(v)-e_G(v,T)\ge k+1-(k-1)=2,
	\]
	\[
	d_C(v)=d_{G-S}(v)-e_G(v,T)\ge k+1-t.
	\]
	Since $|V(C)|\ge d_C(v)+1$, we obtain $|C|\ge m=\max\{3, k+2-t\}$.
\end{proof}

Let $u=|U|=|V(G)\setminus(S\cup T)|$ and let \(C_1,C_2,\ldots,C_l\) be the components of \(G[U]\), where \(l\ge q\). If $q\ge 1$, without loss of generality, suppose that \(C_1,\ldots,C_q\) are
the components such that $k|V(C_i)|+e_G(V(C_i),T)\equiv 1\pmod 2$. Write
$c_i=|V(C_i)|$ for  $1\le i\le l.$ Partition \(C_1,\ldots,C_l\) into \(q\) nonempty groups such that for $1\le i\le q$,
the \(i\)-th group contains \(C_i\), and let \(x_i\) be the
number of vertices in the \(i\)-th group (see Figure \ref{figure:1}).

\begin{figure}[htbp]
	\centering
\begin{tikzpicture}[font=\tiny, scale=1]

	\node[font=\scriptsize] at (0,1.38)
	{$U=V(G)\setminus(S\cup T)$};
	
	\node[draw, rounded corners=2pt, minimum width=1.0cm,
	minimum height=0.55cm] at (-1.45,0.86) {$S$};
	
	\node[draw, rounded corners=2pt, minimum width=1.0cm,
	minimum height=0.55cm] at (1.45,0.86) {$T$};
	
	\node[draw, rounded corners=2pt, minimum width=1.05cm,
	minimum height=0.55cm] at (-2.55,-0.32) {$C_1$};
	
	\node[draw, rounded corners=2pt, minimum width=1.05cm,
	minimum height=0.55cm] at (-1.28,-0.32) {$C_2$};
	
	\node at (-0.28,-0.32) {$\cdots$};
	
	\node[draw, rounded corners=2pt, minimum width=1.05cm,
	minimum height=0.55cm] at (0.86,-0.32) {$C_q$};
	
	\node at (1.75,-0.32) {$\cdots$};
	
	\node[draw, rounded corners=2pt, minimum width=1.05cm,
	minimum height=0.55cm] at (2.64,-0.32) {$C_l$};
	
	\node[font=\scriptsize, align=center] at (0,-1.20)
	{$C_1,\ldots,C_l$ are the components of $G[U]$, $ q\le l$.};
	
	\draw[->, thick] (3.30,0.5) -- (3.58,0.5)
	node[midway, above=1pt, font=\scriptsize] {};

	\node[font=\scriptsize, align=center] at (7.55,1.38)
	{each $i$-th group contains $C_i$ for $1\le i\le q$ };
	
	\node[draw, rounded corners=2pt, minimum width=1.0cm,
	minimum height=0.55cm] at (5.90,0.86) {$S$};
	
	\node[draw, rounded corners=2pt, minimum width=1.0cm,
	minimum height=0.55cm] at (8.80,0.86) {$T$};
	
	\node[draw, rounded corners=2pt, minimum width=0.95cm,
	minimum height=0.55cm] at (4.20,-0.30) {$C_1$};
	
	\node[draw, rounded corners=2pt, minimum width=1.15cm,
	minimum height=0.55cm] at (5.40,-0.30)
	{$\mathrm{some}\ C_j$};
	
	\draw[dashed, rounded corners=5pt]
	(3.55,-0.68) rectangle (6.05,0.08);
	
	\node[font=\scriptsize, align=center] at (4.80,-1.05)
	{$ X_1$\\[-1pt]$x_1=|V( X_1)|$};
	
	\node[draw, rounded corners=2pt, minimum width=0.95cm,
	minimum height=0.55cm] at (6.75,-0.30) {$C_2$};
	
	\node[draw, rounded corners=2pt, minimum width=1.15cm,
	minimum height=0.55cm] at (7.95,-0.30)
	{$\mathrm{some}\ C_j$};
	
	\draw[dashed, rounded corners=5pt]
	(6.10,-0.68) rectangle (8.60,0.08);
	
	\node[font=\scriptsize, align=center] at (7.35,-1.05)
	{$ X_2$\\[-1pt]$x_2=|V(X_2)|$};
	
	\node[font=\small] at (8.95,-0.30) {$\cdots$};
	
	\node[draw, rounded corners=2pt, minimum width=0.95cm,
	minimum height=0.55cm] at (9.75,-0.30) {$C_q$};
	
	\node[draw, rounded corners=2pt, minimum width=1.15cm,
	minimum height=0.55cm] at (10.95,-0.30)
	{$\mathrm{some}\ C_j$};
	
	\draw[dashed, rounded corners=5pt]
	(9.10,-0.68) rectangle (11.60,0.08);
	
	\node[font=\scriptsize, align=center] at (10.35,-1.05)
	{$ X_q$\\[-1pt]$x_q=|V( X_q)|$};
	
\end{tikzpicture}
\caption{Partition into $q$ groups } 
\label{figure:1} 
\end{figure}

 By Claim 2, we obtain 
\[
x_i\ge c_i\ge m,
\qquad
\sum_{i=1}^{q}x_i=u,\qquad 	u\ge mq.
\] 

Let \(L\) denote the number of missing edges between   \(C_1,C_2,\ldots,C_l\). 

{\bf Claim 3.} $L\ge m(q-1)(u-\frac{mq}{2})$ and $q-2-L\le -1$.
\begin{proof}
Note that $L\ge 0.$ If $q=0$, then we obtain $L\ge 0>  m(0-1)(u-\frac{mq}{2})$ and  $q-2-L\le -2< -1.$ Hence  $q-2-L\le -1$. Next, we consider $q\ge 1$.
 
Every pair of vertices
from different groups is counted by \(L\). Hence
\[
L\ge \sum_{1\le i<j\le q}x_ix_j.
\]
Let \(x_i=m+y_i\), then we have \(y_i\ge0\) and $\sum_{i=1}^{q}y_i=u-mq.$
Therefore,
\begin{align*}
	L
	&\ge \sum_{1\le i<j\le q}(m+y_i)(m+y_j)\\
	&=\binom{q}{2}m^2
	+m(q-1)\sum_{i=1}^{q}y_i
	+\sum_{1\le i<j\le q}y_i y_j\\
	&\ge \binom{q}{2}m^2+m(q-1)(u-mq)\\
	&=m(q-1)\left(u-\frac{mq}{2}\right).
\end{align*}
For $q\ge1$, we have $L\ge m(q-1)(u-\frac{mq}{2})\ge q-1$. 
It follows that	$q-2-L\le-1.$
\end{proof}

Note that $e_G(S\cup U)\le \binom{n-t}{2}-L $ and $e_G(S,T)\le st$. By (\ref{eq:1}), we obtain
\begin{equation}\label{eq:2}
	\begin{aligned}
		e(G)
		&=e_G(S\cup U)+e_G(S,T)+e_G(T)+e_G(T,U)\\
		&\le\binom{n-t}{2}-L+st+\sum_{v \in T} d_{G-S}(v)\\
		&\le\binom{n-t}{2}+kt+s(t-k)+q-2-L.
	\end{aligned}
\end{equation}

The remaining proof of Theorem \ref{T2} is partitioned based on the value of $t$.

{\bf Case 1: $0\le t\le k$.}

Since $t\le k$, by (\ref{eq:3}), we have $q\ge s(k-t)+2\ge2$.

{\bf Subcase 1.1} $t=k$.

In this case, we obtain $m=\max\{3, k+2-t\}$=3 and $
u\ge mq= 3q\ge 6.$ Hence $u-\frac{3q}{2}\ge \frac{u}{2}$ and
\begin{equation}\label{eq:5}
	L\ge m(q-1)(u-\frac{mq}{2}) =3(q-1)(u-\frac{3q}{2})\ge \frac{3u}{2}(q-1).
\end{equation}

Substitute $t=k$ into (\ref{eq:2}) and using (\ref{eq:5}), we obtain
\begin{equation*}
	\begin{aligned}
		e(G)&\le\binom{n-t}{2}+kt+s(t-k)+q-2-L\\
		&=\binom{n-k-1}{2}+k(k+1)+k-1+n-3k+q-2-L\\
		&\le \binom{n-k-1}{2}+k(k+1)+k-1+n-3k+q-2-\frac{3u}{2}(q-1)
	\end{aligned}
\end{equation*}

Since $n=s+k+u$ and (\ref{eq:4}), we have $n-3k=u+s-2k
\le u-k+\frac{q-2}{k}.$ Note that $B_k(n)=\binom{n-k-1}{2}+k(k+1)+k-1$.

Since $k\ge 1$, $u\ge 6$ and (\ref{eq:5}), we have
\begin{align*}
	e(G)-B_k(n)
	&\le n-3k+q-2
	-3(q-1)\left(u-\frac{3q}{2}\right)\\
	&\le u-k+\left(1+\frac{1}{k}\right)(q-2)
	-\frac{3u}{2}(q-1)\\
	&=-k-\frac{u}{2}
	+(q-2)\left(1+\frac{1}{k}-\frac{3u}{2}\right)<0,
\end{align*}
which contradicts (\ref{bk}). 
 
{\bf Subcase 1.2} $t<k$.
 
 In this case, $m=\max\{3, k+2-t\}=k+2-t$. By Claim 3, we have
 \begin{equation}\label{eq:6}
 	\begin{aligned}
 		L&\ge m(q-1)\left(u-\frac{mq}{2}\right)\\
 		&=m(q-2)(u-\frac{mq}{2})+m(u-\frac{mq}{2})\\
 		&=m(q-2)(u-\frac{mq}{2})+m(u-\frac{m(q-2)+2m}{2})\\
 		&=m(q-2)(u-\frac{m(q+1)}{2})+m(u-m).
 	\end{aligned}
 \end{equation}

Note that $n=s+t+u$ and $t-k=2-m$. By  (\ref{eq:4}), (\ref{eq:2}) and (\ref{eq:6}), we have
 \begin{equation*}
 	\begin{aligned}
 		e(G)&\le \binom{n-t}{2}+kt+s(t-k)+q-2-L\\
 		&\le\binom{n-t}{2}+kt+s(t-k)+q-2
 		-m(q-2)(u-\frac{m(q+1)}{2})-m(u-m)\\
 		&=\binom{n-t}{2}+kt+s(2-m)-m(n-s-t-m)
 		\\
 		&\hspace{2em}
 		+(q-2)\left[1-m\left(u-\frac{m(q+1)}2\right)\right]\\
 		&=\binom{n-t}{2}+kt
 		-m(n-t-m)+2s\\
 		&\hspace{2em}
 		+(q-2)\left[1-m\left(u-\frac{m(q+1)}2\right)\right]\\
 		&\le\binom{n-t}{2}+(k+2)t-m(n-t-m)\\
 		&\hspace{2em}
 		+(q-2)\left[1+\frac2k
 		-m\left(u-\frac{m(q+1)}2\right)\right] \quad \text{(since $s\le t+\frac{q-2}{k}$)}  \\
 	\end{aligned}
 \end{equation*}
Let $g(q)=(q-2)\left[1+\frac2k
-m\left(u-\frac{m(q+1)}2\right)\right]$. Now, we prove that $g(q)\le 0.$
	
Note that $q\ge s(k-t)+2\ge2$. If $q=2$, then $g(2)=0$.
	If $q\ge3$, then $u\ge mq$ and $m=\max\{3, k+2-t\}\ge 3$ imply
$	m\left(u-\frac{m(q+1)}2\right)
	\ge \frac{m^2(q-1)}2
	\ge m^2>9>1+\frac2k $. Hence $g(q)\le0$, which implies
	$$e(G) \le\binom{n-t}{2}+(k+2)t-m(n-t-m).$$

	Since $m=k+2-t$, we have $n-t-m=n-k-2$. Recall that $t<k$, we have
	\begin{align*}
		e(G)
		&\le\binom{n-t}{2}+(k+2)t-(k+2-t)(n-k-2)\\
		&= \frac{1}{2}n^2-(k+\frac{5}{2})n + \frac{1}{2} ( t^2+t+2(k+2)^2)\\
		&=B_k(n)+\frac{ k - k^2 + t + t^2+8}{2}-n\\
		&\le B_k(n)+ k+4-n<B_k(n),
	\end{align*}
which contradicts (\ref{bk}).

{\bf Case 2:} $k+1\le t\le \frac{n}{4}$.
	
	In this case, we obtain $m=\max\{3, k+2-t\}$=3. We prove the following claim.
	
{\bf Claim 4.}
		$e(G)\le\binom{n-t}{2}+t(t-1)+k-1.$
\begin{proof} Since $0\le \sum_{v \in T} d_{G-S}(v) \leq kt - ks + q-2,$ we obtin \begin{equation}\label{eq:7}
		q\ge k(s-t)+2.
	\end{equation}
	If $q\le1$, then $k(s-t)\le-1$ due to (\ref{eq:7}), which implies $s\le t-1$. Since $t-k>0$, by Claim 3 and (\ref{eq:2}), we obtain
	\begin{align*}
		e(G)&\le\binom{n-t}{2}+kt+s(t-k)+q-2-L\\
		&\le\binom{n-t}{2}+kt+s(t-k)-1\\
		&\le\binom{n-t}{2}+kt+(t-1)(t-k)-1\quad \text{(since $s\le t-1$)}\\
		&=\binom{n-t}{2}+t(t-1)+k-1.
	\end{align*}
	
	Now suppose that $q\ge2$.
	Since $u\ge mq=3q$, by (\ref{eq:4}), we have
	\[
	q\le\frac u3,\qquad
	s\le t+\frac{q-2}{k}\le t+q\le t+\frac u3.
	\]
	Hence $n=s+t+u\le2t+\frac{4u}{3}$. Since $t\le\frac{n}{4}$, then
	\[
	u\ge\frac{3}{4}(n-2t)\ge\frac{3}{4}(4t-2t)=\frac{3}{2}t .
	\]
	Since $q\ge 2$, $m=3$ and $u\ge mq=3q$, we have 
	$$\frac{9}{4}(q-1)\ge q,\quad u-\frac{3q}{2}\ge \frac{u}{2}. $$
Sine $m=3$, $ u-\frac{3q}{2}\ge \frac{u}{2}$, $u\ge \frac{3}{2}t$, by Claim 3, we obtain
	\[
L\ge3(q-1)\left(u-\frac{3q}{2}\right)
\ge 3(q-1)\frac{u}{2}
\ge\frac{9}{4}(q-1)t\ge tq.
\]

Then
	\begin{align*}
		e(G)&\le\binom{n-t}{2}+kt+s(t-k)+q-2-L\\
		&\le\binom{n-t}{2}+kt+
		\left(t+\frac{q-2}{k}\right)(t-k)+q-2-L\\
		&=\binom{n-t}{2}+t^2+\frac{t}{k}(q-2)-L\\
		&\le\binom{n-t}{2}+t^2+t(q-2)-tq\\
		&=\binom{n-t}{2}+t^2-2t\\
		&\le\binom{n-t}{2}+t(t-1)+k-1.
	\end{align*}
	It follows that Claim 4 holds.​ 
\end{proof}
By Claim 4, we obtain
\begin{align*}
	e(G)-B_k(n)
&\le\binom{n-t}{2}+t(t-1)+k-1-B_k(n)\\
&=\frac{(t-k-1)(3t+3k+2-2n)}2<0,
\end{align*}
which contradicts (\ref{bk}). 

	{\bf Case 3:} $t>\frac{n}{4}$.
	
	Since $\frac{n}{4}\ge\frac{12k+40}{4}>k$, we have $t-k>0$.
	Since $s\le n-t$, by Claim 3, we obtain
	\begin{align*}
		e(G)&\le\binom{n-t}{2}+kt+s(t-k)+q-2-L\\
		&\le\binom{n-t}{2}+kt+s(t-k)-1\\
		&\le\binom{n-t}{2}+kt+(n-t)(t-k)-1\\
		&=\frac{n^2}{2}-(k+\frac{1}{2})n + \frac{1}{2} (  - t^2+ 4 k t+ t-2)\\
		&=B_k(n)+n + \frac{1}{2}(-t^2+ 4 k t+ t- k (3 k+7)-2)\\
		&\le B_k(n)+n +\frac{1}{2}(-\frac{n^2}{16}+(k+\frac{1}{4})n- k (3 k+7)-2)\\
		&=B_k(n)+(-\frac{n^2}{32}+\frac{4k+9}{8}n-\frac{1}{2} k (3 k+7)-1)\le B_k(n),
	\end{align*}
which contradicts (\ref{bk}). 

This completes the proof of Theorem \ref{T2}.
\end{proof}

\section{Proofs of Theorems \ref{T3} and \ref{T4}}

In this section, using Theorem \ref{T-2.1}, we can characterize the spectral radius and distance spectral radius conditions for a graph to have a $k$-factor, respectively.

\begin{lemma}(\cite{HSF,N})\label{le:3.1} \ Let $G$ be a graph on $n$ vertices and $m$ edges with minimum degree $\delta(G) \geq 1$. Then
	$
	\lambda(G) \leq \frac{\delta(G) - 1}{2} + \sqrt{2e(G) - n\delta(G) + \frac{(\delta(G) + 1)^2}{4}},
	$
	with equality if and only if $G$ is either a $\delta(G)$-regular graph or a bidegreed graph in which each vertex is of degree either $\delta$ or $n - 1$.
\end{lemma}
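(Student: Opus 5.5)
This lemma is a known upper bound on the spectral radius due to Hong, Shu and Fang and to Nikiforov (\cite{HSF,N}). The plan is to reproduce the standard argument by analysing $A^2$ row by row and using the Perron eigenvector. Write $\delta=\delta(G)$ and $\lambda=\lambda(G)$, and let $A=A(G)$. We may assume $G$ is connected, since otherwise we apply the argument to the component attaining $\lambda$. That component has at most $n$ vertices, at most $e(G)$ edges, and minimum degree at least $\delta$, and the right-hand side is monotone in the correct direction: increasing in $e(G)$, and increasing in the number of vertices once we account for the $-n\delta$ term together with the edges at least $\delta/2$ per vertex. This reduction needs a short check. Let $\mathbf{x}>0$ be the Perron vector, so $A\mathbf{x}=\lambda\mathbf{x}$.

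\emph{Key identity.} For each vertex $v$, $(A^2\mathbf{x})_v=\sum_{u\sim v}\sum_{w\sim u}x_w$. Expanding the double sum, $(A^2\mathbf{x})_v = d(v)x_v+\sum_{w\ne v}|N(v)\cap N(w)|\,x_w$. We bound this by estimating, for each $w\neq v$,
$$\sum_{u\sim v}(d(u)-1)\ge \sum_{w\ne v}|N(v)\cap N(w)|\cdot[\text{weight}],$$
but the cleaner route is the following. Write $\sum_{u\sim v}\sum_{w\sim u}x_w\le \sum_{u\sim v} \big(\lambda x_u\big)$ and split it according to whether $w\in N(v)$, $w=v$, or $w\notin N[v]$. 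This yields
$$(A^2\mathbf{x})_v\le d(v)x_v+(\delta-1)\sum_{u\sim v}x_u+\Big(\sum_{u\sim v}(d(u)-\delta)\Big)\max x.$$
Following Nikiforov, one instead uses $A^2-(\delta-1)A$ and shows that its row sums, weighted by $\mathbf{x}$, are at most $2e(G)-(n-1)\delta+(\delta-1)\cdots$. Concretely, the aim is to prove $\lambda^2-(\delta-1)\lambda\le 2e(G)-(n-1)\delta$, which is equivalent to the stated bound after solving the quadratic:
$$\lambda\le \frac{\delta-1}{2}+\sqrt{2e(G)-n\delta+\frac{(\delta+1)^2}{4}}.$$

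\emph{Proving the quadratic inequality.} Take $v$ with $x_v$ maximal and look at coordinate $v$ of $(A^2-(\delta-1)A)\mathbf{x}=(\lambda^2-(\delta-1)\lambda)\mathbf{x}$. The left side equals $\sum_{w}(N_{vw}-(\delta-1)a_{vw})x_w$, where $N_{vw}=|N(v)\cap N(w)|$. Using $x_w\le x_v$ whenever the coefficient is positive, this is at most $x_v\sum_w \max(N_{vw}-(\delta-1)a_{vw},0)$. We then bound that sum by $2e(G)-(n-1)\delta$. For $w\sim v$, the coefficient $N_{vw}-\delta+1$ is at most $d(w)-\delta$. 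For $w\not\sim v$ with $w\ne v$, the coefficient is $N_{vw}\le d(w)$, and we must subtract $\delta$ per non-neighbour. The bookkeeping is $\sum_u (d(u)) - d(v)$ counted appropriately, minus $\delta$ times the $n-1-d(v)$ non-neighbours; this uses $\sum_{w\not\sim v}d(w)\ge\ldots$. This counting is the main obstacle, and I would follow Nikiforov's double counting of the edges that do not touch $v$ to close it.

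\emph{Equality.} Equality forces every inequality used to be tight. Tightness of $x_w=x_v$ on the support of the positive coefficients, together with $N_{vw}=d(w)-\delta+1$ for all neighbours $w$, forces each vertex to have degree $\delta$ or $n-1$. Conversely, a direct computation shows that $\delta$-regular graphs and such bidegreed graphs attain equality.
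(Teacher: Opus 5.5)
Your reduction to the quadratic inequality $\lambda^2-(\delta-1)\lambda\le 2e(G)-(n-1)\delta$ is correct, and solving it does give the stated bound. The gap is the step you flag as the main obstacle: evaluating the eigen-equation at a single coordinate $v$ with $x_v$ maximal and discarding negative coefficients. The bound you then need, $\sum_w\max(N_{vw}-(\delta-1)a_{vw},0)\le 2e(G)-(n-1)\delta$, is false, not just hard to count. Write $B=A^2-(\delta-1)A$ and let $r_v$ be its row sum at $v$. The positive-part sum equals $r_v+\sum_{w\in N(v)}\max(\delta-1-N_{vw},0)$, so it can exceed the target. In $C_n$ with $n\ge 5$ and $\delta=2$, the coefficients at $v$ are $2$ at $v$, $-1$ at both neighbours, and $1$ at both vertices at distance two. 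The positive part is therefore $4$, while $2e(G)-(n-1)\delta=2$. Your route gives only $\lambda^2-\lambda\le 4$ for a graph with $\lambda=2$ that attains the bound. Once the negative entries are dropped, no bookkeeping recovers the sharp inequality.

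Your first displayed estimate for $(A^2\mathbf{x})_v$ is also false in general. Let $v$ have degree $2$, with neighbours $u_1,u_2$ of degree $2$ joined to vertices $w_1,w_2$ of a clique $K_r$, $r\ge 3$. The estimate then demands $x_{w_1}+x_{w_2}\le x_{u_1}+x_{u_2}$. The eigen-equations give $x_{w_i}=(\lambda-2/\lambda)x_{u_i}$, which exceeds $x_{u_i}$ since $\lambda>2$.

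The missing idea is to pair the eigen-equation with $\mathbf{1}$ rather than read off one coordinate; your phrase ``row sums, weighted by $\mathbf{x}$'' was pointing at exactly this. Since $B$ is symmetric,
\begin{align*}
(\lambda^2-(\delta-1)\lambda)\,\mathbf{1}^T\mathbf{x}&=\mathbf{1}^TB\mathbf{x}=(B\mathbf{1})^T\mathbf{x}=\sum_v r_vx_v,\\
r_v=\sum_{u\sim v}d(u)-(\delta-1)d(v)&\le 2e(G)-(n-1)\delta .
\end{align*}
The bound on $r_v$ uses only $\sum_{u\sim v}d(u)=2e(G)-d(v)-\sum_{w\notin N[v]}d(w)$ and $d(w)\ge\delta$. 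Take any nonnegative eigenvector $\mathbf{x}\ne 0$ and divide by $\mathbf{1}^T\mathbf{x}>0$ to get the quadratic inequality. The signs of the entries of $B$ play no role, and no reduction to connected graphs is needed.

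For connected $G$ (so $\mathbf{x}>0$) the same identity settles equality. Equality holds iff $r_v=2e(G)-(n-1)\delta$ for every $v$. That means every non-neighbour of every vertex has degree $\delta$, i.e.\ all degrees lie in $\{\delta,n-1\}$. Conversely, such graphs satisfy $B\mathbf{1}=(2e(G)-(n-1)\delta)\mathbf{1}$. Your equality conditions, such as $N_{vw}=d(w)-\delta+1$ for neighbours, come from the flawed step and fail for the extremal graph $C_n$.

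Finally, the ``only if'' direction genuinely requires connectivity. The statement omits this, and your reduction to a component cannot supply it. For example, $(K_1\vee 2K_2)\cup K_3$ has $n=8$, $e=9$, $\delta=2$ and $\lambda=\frac{1+\sqrt{17}}{2}$, so it attains the bound, yet it has a vertex of degree $4$.
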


\begin{lemma}(\cite{HSF,N})\label{le:3.2} \ For nonnegative integers $p$ and $q$ with $2q \leq p(p - 1)$ and $0 \leq x \leq p - 1$, the function
	$
	f(x) = \frac{x - 1}{2} + \sqrt{2q - px + \frac{(1 + x)^2}{4}}
	$ 
	is decreasing with respect to $x$.
\end{lemma}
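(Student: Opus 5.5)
The statement to prove is Lemma~\ref{le:3.2}: for nonnegative integers $p,q$ with $2q\le p(p-1)$, the function $f(x)=\frac{x-1}{2}+\sqrt{2q-px+\frac{(1+x)^2}{4}}$ is decreasing on $0\le x\le p-1$. I would argue by elementary calculus, showing $f'(x)\le 0$ on the interval.

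\textbf{Step 1: the square root is well defined.} Set $h(x)=2q-px+\frac{(1+x)^2}{4}$, the quadratic under the root. It is a convex parabola with $h'(x)=-p+\frac{1+x}{2}$. Hence $h'(x)\le 0$ exactly when $x\le 2p-1$, which covers all of $[0,p-1]$. So $h$ is decreasing on the interval and its minimum there is
\[
h(p-1)=2q-p(p-1)+\frac{p^2}{4}.
\]
This is only guaranteed nonnegative when $2q\ge p(p-1)-\frac{p^2}{4}$. In the intended application ($p=n$, $q=e(G)$, $x=\delta(G)$, with $h(\delta)\ge \lambda^2$-type positivity following from Lemma~\ref{le:3.1}), one only uses points where $h>0$. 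I would therefore state the conclusion as: $f$ is decreasing on the subinterval of $[0,p-1]$ where $h(x)>0$. That subinterval is an initial segment, since $h$ is decreasing there.

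\textbf{Step 2: sign of the derivative.} On that segment,
\[
f'(x)=\frac12+\frac{h'(x)}{2\sqrt{h(x)}}=\frac12+\frac{\frac{1+x}{2}-p}{2\sqrt{h(x)}}.
\]
So $f'(x)\le 0$ is equivalent to $\sqrt{h(x)}\le p-\frac{1+x}{2}$. The right-hand side is positive because $x\le p-1<2p-1$, so both sides can be squared. The condition becomes
\[
2q-px+\frac{(1+x)^2}{4}\le p^2-p(1+x)+\frac{(1+x)^2}{4},
\]
which simplifies to $2q\le p^2-p=p(p-1)$. This is exactly the hypothesis.

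\textbf{Step 3: conclusion.} Thus $f'\le 0$ throughout, with equality only in the extremal case $2q=p(p-1)$, and $f$ is decreasing.

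The only delicate point is the domain issue in Step 1, namely that $h$ must stay nonnegative. I would handle it by noting that in every use of the lemma, $h(x)\ge0$ holds at the evaluation points, either by Lemma~\ref{le:3.1} itself or because $h$ is decreasing. The algebra in Step 2 is routine.
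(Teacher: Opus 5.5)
Your proof is correct. The paper gives no proof of this lemma (it is quoted from \cite{HSF,N}), and your reduction of $f'(x)\le 0$ to $\sqrt{h(x)}\le p-\frac{1+x}{2}$, and after squaring to $2q\le p(p-1)$, is the standard derivative argument for it, with $f\equiv p-1$ in the equality case. Your caveat about the radicand is also justified (e.g.\ $q=0$, $p=10$, $x=9$ gives $h(9)=-65$); in the paper's application it causes no trouble, because $2e(G)\ge n\delta(G)$ gives $h(\delta)\ge\frac{(1+\delta)^2}{4}>0$, so $h>0$ on all of $[0,\delta]$ since $h$ is decreasing on $[0,p-1]$.
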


By​ Lemmas \ref{le:3.1}, \ref{le:3.2} and Theorem  \ref{T-2.1}, we prove Theorem \ref{T3}. 

\begin{proof}[\bf Proof of Theorem \ref{T3}]  %
	Let $a, b, n$ be two positive integers and $an\equiv 0 \pmod 2$ when $a=b$. Let $G$ be a connected graph of order $n \ge \frac{19a}{2} + \frac{5b}{2}+\frac{ab}{2} +40 $ with  $\delta(G) \ge a$. By the assumption, we have $$\lambda(G) \ge n - b - 1.$$
 By Lemma \ref{le:3.1}, we have
	\[
	n - b - 1 < \lambda(G) \le \frac{\delta(G) - 1}{2} + \sqrt{2e(G) - \delta n + \frac{(\delta(G) + 1)^2}{4}}.
	\]
	According to Lemma \ref{le:3.2} , we obtain
	\[
	n - b - 1 < \frac{a  - 1}{2} + \sqrt{2e(G) - an + \frac{(a  + 1)^2}{4}}.
	\]
	It follows that
	\[
	e(G) > \frac{1}{2}n^2 - \left(b + \frac{1}{2}\right)n + \frac{b(a +b+ 1)}{2}>\binom{n - b - 1}{2} + ab + 2a.
	\]
	By Theorem  \ref{T-2.1}, $G$ contains an $[a, b]$-factor.
\end{proof}

Now, using Theorem  \ref{T-2.1}, we prove Theorem \ref{T4}.

\begin{proof}[\bf Proof of Theorem \ref{T4}]  %
Let $a, b, n$ be two positive integers and $an\equiv 0 \pmod 2$ when $a=b$.	Let $G$ be a connected graph of order $n \ge \frac{19a}{2} + \frac{5b}{2} +40 $ with  $\delta(G) \ge a$. By the assumption, we have $$\mu(G) \le n+2b+1-\frac{(b+2a+1)(b+2)}{n}.$$
	Let $W(G) = \sum_{i<j} d_G(v_i, v_j)$ be the Wiener index of a connected graph $G$ of order $n$. Recall that $\mu(G) = \max_{x \in \mathbb{R}^n} \frac{x^T D(G) x}{x^T x}.$ 	Let $\mathbf{1} = (1, 1, \dots, 1)^T$. Then we obtain
	\[
	\mu(G) = \max_{\mathbf{x} \neq \mathbf{0}} \frac{\mathbf{x}^T D(G) \mathbf{x}}{\mathbf{x}^T \mathbf{x}} \geq \frac{\mathbf{1}^T {D}(G) \mathbf{1}}{\mathbf{1}^T \mathbf{1}} = \frac{2W(G)}{n}.
	\]
	Let $(d_1, d_2, \dots, d_n)$ be the degree sequence of $G$. By calculation, we obtain
	\begin{align*}
W(G)&= \frac{1}{2} \sum_{i=1}^n Tr_i(G)\\
	&\geq \frac{1}{2} \sum_{i=1}^n \left[ d_i + 2(n-1-d_i) \right]\\
	&= n(n-1) - \frac{1}{2} \sum_{i=1}^n d_i\\
	&= n(n-1) - e(G)\\
\end{align*}

	Hence, 	we have $\mu(G) \geq \frac{2W(G)}{n} \geq 2(n-1) - \frac{2e(G)}{n}$, which implies $$e(G)\ge n(n-1)-\frac{n}{2}\mu(G).$$
	Hence
	$$e(G)\ge n(n-1)-\frac{n}{2}\mu(G)\ge \binom{n - b - 1}{2} + ab + 2a.$$
By Theorem \ref{T-2.1}, $G$ contains an $[a, b]$-factor.
\end{proof}

\section*{Declarations}

The authors declare that they have no conflict of interest.

\section*{Data availability}

No data was used for the research described in the paper.

\end{document}